\newcommand\blfootnote[1]{%
  \begingroup
  \renewcommand\thefootnote{}\footnote{#1}%
  \addtocounter{footnote}{-1}%
  \endgroup
}
\newtheorem{theorem}{Theorem}
\theoremstyle{definition}
\newtheorem{definition}[theorem]{Definition}
\title{An impossibility theorem for gerrymandering}
\author{Boris Alexeev\qquad Dustin~G.~Mixon\footnote{Department of Mathematics, The Ohio State University, Columbus, OH}}
\date{}
\begin{document}
\maketitle

\blfootnote{Send correspondence to \texttt{mixon.23@osu.edu}}

\begin{abstract}
The U.S.\ Supreme Court is currently deliberating over whether a proposed mathematical formula should be used to detect unconstitutional partisan gerrymandering.
We show that in some cases, this formula will only flag bizarrely shaped districts as potentially constitutional.
\end{abstract}


In 1812, the Boston Gazette published a political cartoon that likened the contorted shape of a Massachusetts state senate election district to the profile of a salamander~\cite{Griffith:07}.
The cartoon insinuated that Governor Elbridge Gerry approved this district's shape for his party's benefit, thereby coining the portmanteau ``gerrymander.''
Ever since, it has been common practice to use geometry as a signal for gerrymandering, with the most egregious districts exhibiting bizarre shapes; see \cite{Ingraham:14,Pisanty:online} for example.
To help bring this geometric signal to gerrymandering court cases, a team of Boston-based mathematicians known as the Metric Geometry and Gerrymandering Group is currently offering expert witness training in a sequence of Geometry of Redistricting workshops across the country~\cite{Metric:online}.

Partisan gerrymandering is the subject of a U.S.\ Supreme Court decision expected next year~\cite{CohnB:17}.
In this case, the justices will evaluate a completely different approach to detect gerrymandering.
Instead of flagging districts with irregular shapes, the proposed method attempts to detect the intended consequence of partisan gerrymandering:\ one party wasting substantially more votes than the other party.
This method summarizes the disproportion of wasted votes in a tidy statistic known as efficiency gap.

Recently, Bernstein and Duchin~\cite{BernsteinD:17} provided a helpful discussion of efficiency gap, in which they mention that it sometimes incentivizes bizarrely shaped districts.
This is perhaps counterintuitive considering the geometry-infused history of gerrymandering.
In this note, we demonstrate an extreme version of this observation:
\begin{center}
\textit{Sometimes, a small efficiency gap is only possible with bizarrely shaped districts.}
\end{center}
Specifically, we show that every districting system must violate one of three well-established desiderata that we make explicit later: one person, one vote; Polsby--Popper compactness; and partisan efficiency.
As such, our result is reminiscent of Arrow's impossibility theorem concerning ranked voting electoral systems~\cite{Arrow:50}.

\begin{definition}
A \textbf{districting system} is a function that receives disjoint finite sets $A,B\subseteq [0,1]^2$ and a positive integer $k$, and then outputs a partition $D_1\sqcup\cdots\sqcup D_k=[0,1]^2$.
\end{definition}

Here, $A$ and $B$ correspond to voter locations from two major parties, respectively; we do not consider third-party voters.
In practice, districts are drawn given the locations of the entire population from census data, but $A$ and $B$ can be estimated using past election data; in particular, these estimates enable partisan gerrymandering.
We focus on districting systems for the unit square largely for convenience, and without loss of generality.
Indeed, one may partition any state with such a districting system by first inscribing the state in a square, and conversely, a districting system on any state determines a system for the square by inscribing a square in that state.

When evaluating a given districting system $f\colon(A,B,k)\mapsto\{D_i\}_{i=1}^k$, one may test for any number of desirable characteristics.
What follows is a list of such characteristics.
Here, when we say ``the districts always satisfy'' a given condition, we mean that all possible choices of $(A,B,k)$ simultaneously allow for $\{D_i\}_{i=1}^k=f(A,B,k)$ to satisfy that condition.
\begin{itemize}
\item[(i)]
\textbf{One person, one vote.}
There exists $\delta\in[0,1)$ such that the districts always satisfy
\begin{equation}
\label{eq.1}
(1-\delta)\bigg\lfloor\frac{|A\cup B|}{k}\bigg\rfloor
\leq \big|(A\cup B)\cap D_i\big|
\leq (1+\delta)\bigg\lceil\frac{|A\cup B|}{k}\bigg\rceil
\qquad
\forall i\in\{1,\ldots,k\}.
\end{equation}
In words, the districts are drawn to contain roughly equal numbers of voters.
Assuming equal voter turnout, this is equivalent to the districts containing roughly equal represented populations.
The latter has been a guiding principle for all levels of redistricting in the United States following a series of U.S.\ Supreme Court decisions in the 1960s, namely \textit{Gray v.\ Sanders}, \textit{Reynolds v.\ Sims}, \textit{Wesberry v.\ Sanders}, and \textit{Avery v.\ Midland County}~\cite{Smith:14}.
\item[(ii)]
\textbf{Polsby--Popper compactness.}
There exists $C>0$ such that the districts always satisfy
\begin{equation}
\label{eq.2}
|\partial D_i|^2
\leq C|D_i|
\qquad
\forall i\in\{1,\ldots,k\}.
\end{equation}
Here, $|\partial D_i|$ denotes the perimeter of $D_i$, whereas $|D_i|$ denotes its area.
In 1991, Polsby and Popper~\cite{PolsbyP:91} introduced their so-called \textbf{Polsby--Popper score}, defined by $4\pi|D_i|/|\partial D_i|^2$, as a measure of geographic compactness.
Their intent was to allow for an enforceable standard (e.g., no district shall score below $0.2$ without additional scrutiny) that would ``make the gerrymanderer's life a living hell.''
In this spirit, Arizona's redistricting commission in 2000 used the Polsby--Popper score to ensure geographic compactness amongst their voting districts~\cite{Moncrief:11}.
The exceedingly long perimeters of the 1st and 12th congressional districts of North Carolina were cited in the recent U.S.\ Supreme Court case \textit{Cooper v.\ Harris}, in which the Court ruled that both districts were the result of unconstitutional racial gerrymandering~\cite{Blythe:17}.
At the time, these were two of the three congressional districts across the country with the smallest Polsby--Popper scores~\cite{Pisanty:online}.
\item[(iii)]
\textbf{Partisan efficiency.}
There exist $\alpha,\beta>0$ such that the districts always satisfy
\begin{equation}
\label{eq.3}
\big|\operatorname{EG}(D_1,\ldots,D_k;A,B)\big|<\frac{1}{2}-\alpha
\qquad
\text{whenever}
\qquad
\big||A|-|B|\big|<\beta|A\cup B|.
\end{equation}
Here, the so-called \textbf{efficiency gap} $\operatorname{EG}(\cdot;A,B)$ quantifies the extent to which votes are disproportionately ``wasted'' by the districting.
Suppose $|A\cap D_i|>|B\cap D_i|$.
Then the number of wasted votes in $A\cap D_i$ is the excess
\[
|A\cap D_i|-\bigg\lceil \frac{1}{2}\big|(A\cup B)\cap D_i\big|\bigg\rceil,
\]
considering $A$ did not need these votes to carry the district $D_i$.
Meanwhile, all of the votes in $B\cap D_i$ were wasted, since they did not contribute to winning any district.
Letting $w_{A,i}$ and $w_{B,i}$ denote the wasted votes in $A\cap D_i$ and $B\cap D_i$, respectively, then the efficiency gap is defined by
\[
\operatorname{EG}(D_1,\ldots,D_k;A,B)
:=\frac{1}{|A\cup B|}\sum_{i=1}^k\Big(w_{A,i}-w_{B,i}\Big).
\]
Stephanopoulos and McGhee introduced the efficiency gap in~\cite{StephM:15}, and it plays a key role in the U.S.\ Supreme Court case \textit{Gill v.\ Whitford}; the Court is currently deliberating over whether this gap should be used to signal unconstitutional partisan gerrymandering~\cite{CohnB:17}.
We note that the efficiency gap can range anywhere from $0$ to $1/2$, and Stephanopoulos and McGhee suggest that a gap of 8\% or more is sufficient to flag potential partisan gerrymandering.
Bernstein and Duchin~\cite{BernsteinD:17} observe that such a small efficiency gap is only possible when neither $A$ nor $B$ make up more than 79\% of the vote.
In particular, there already exist $A$ and $B$ for which no choice of districts satisfies \eqref{eq.3} with $\alpha=0.50-0.08=0.42$ and $\beta>0.79-(1-0.79)=0.58$.
By comparison, our notion of partisan efficiency is particularly weak since we allow $\alpha$ and $\beta$ to be arbitrarily small.
\end{itemize}

Observe that (i) and (ii) are agnostic to the voting preferences of $A$ and $B$; in particular, (i) only depends on $A\cup B$, whereas (ii) only sees the resulting districts.
Meanwhile, (iii) explicitly distinguishes between $A$ and $B$.
In practice, (iii) would be evaluated after election day, though it could be predicted with the help of past election data.
While (i), (ii) and (iii) have the form ``there exist constant(s) such that the districts always satisfy some condition,'' the court may assign values for $\delta$, $C$, $\alpha$ and $\beta$, explicitly requiring the districts to satisfy \eqref{eq.1}, \eqref{eq.2} and \eqref{eq.3} with these parameters.
The following theorem establishes that such a requirement would not always be feasible:

\begin{theorem}
\label{thm.main result}
There is no districting system that simultaneously satisfies all three desiderata above.
In particular, for every $\delta$, $C$, $\alpha$, $\beta$ and $k$, there exist $A$ and $B$ such that every choice of districts $\{D_i\}_{i=1}^k$ violates one of \eqref{eq.1}, \eqref{eq.2} and \eqref{eq.3}.
\end{theorem}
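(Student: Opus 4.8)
The plan is to take $A$ and $B$ to be a very fine, nearly balanced ``salt-and-pepper'' mixture on a grid, arranged so that every region of non-negligible area and non-extravagant perimeter must contain $A$- and $B$-voters in essentially their global proportions. If the mixture has slightly more $A$-voters than $B$-voters, then every such region is won narrowly by $A$, so $B$ wastes almost all of its votes while $A$ wastes almost none; this pins $|\operatorname{EG}|$ near $1/2$. Consequently, the efficiency gap can be kept small only if some district is either too small (violating \eqref{eq.1}) or too contorted (violating \eqref{eq.2}).

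More precisely, fix the parameters $\delta,C,\alpha,\beta,k$; we may assume $\alpha\le 1/2$ and $\beta\le 1$. Set $\epsilon:=\tfrac18\min\{\alpha,\beta\}$ and let $N$ be a large integer to be chosen at the very end. Put one voter at each of the $N^2$ points $(i/N,j/N)$ with $0\le i,j<N$, and color them with a perturbed checkerboard: start from the checkerboard coloring (so exactly half are $A$) and recolor with $A$ an equidistributed subset --- for instance, a sublattice --- of the $B$-colored points, chosen so that $\tfrac12<\rho_A:=|A|/N^2\le\tfrac12+\epsilon$. Then $|A\cup B|=N^2$ and $\bigl||A|-|B|\bigr|=(2\rho_A-1)N^2\le 2\epsilon N^2<\beta|A\cup B|$, so this instance is ``balanced'' in the sense of \eqref{eq.3}.

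The crux is the following discrepancy estimate: for every measurable $D\subseteq[0,1]^2$ of finite perimeter,
\[
\bigl|\,|(A\cup B)\cap D|-N^2|D|\,\bigr|=O\!\left(N|\partial D|+1\right),
\qquad
\bigl|\,|A\cap D|-\rho_A\,|(A\cup B)\cap D|\,\bigr|=O_\epsilon\!\left(N|\partial D|+1\right).
\]
Both follow from two elementary facts: the boundary of a finite-perimeter set meets only $O(N|\partial D|+1)$ cells of the $N\times N$ grid, and away from those cells both the checkerboard coloring and the sublattice correction telescope to $O(1)$ per row. Establishing this \emph{uniformly} over all finite-perimeter $D$, with the stated dependence on $|\partial D|$, is the step I expect to be the main obstacle; the remainder is bookkeeping. (If some $D_i$ is non-measurable or has infinite perimeter then \eqref{eq.2} fails outright, so nothing is lost here.)

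Granting the estimate, let $\{D_i\}_{i=1}^k$ be any partition of $[0,1]^2$ that satisfies \eqref{eq.1} and \eqref{eq.2}; I claim \eqref{eq.3} must fail. From \eqref{eq.2} and $|D_i|\le 1$ we get $|\partial D_i|\le\sqrt C$; plugging this bound and \eqref{eq.1} into the first estimate forces $|D_i|\ge c_0$ for some constant $c_0=c_0(\delta,k)>0$ once $N$ is large. The second estimate then gives $|A\cap D_i|=(\rho_A+o(1))\,|(A\cup B)\cap D_i|$, so since $\rho_A>\tfrac12$, party $A$ wins every district $D_i$ --- narrowly --- for $N$ large. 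Substituting $|A\cap D_i|-|B\cap D_i|=(2\rho_A-1)N^2|D_i|+O_\epsilon(N)$ and $|(A\cup B)\cap D_i|=N^2|D_i|+O(N)$ into the wasted-vote formulas yields $w_{A,i}-w_{B,i}=\bigl(2\rho_A-\tfrac32\bigr)N^2|D_i|+O_\epsilon(N)$, so, summing over $i$ and dividing by $|A\cup B|=N^2$,
\[
\operatorname{EG}(D_1,\ldots,D_k;A,B)=\Bigl(2\rho_A-\tfrac32\Bigr)+O_\epsilon\!\left(\frac{k}{N}\right)\le\Bigl(2\epsilon-\tfrac12\Bigr)+O_\epsilon\!\left(\frac{k}{N}\right).
\]
Choosing $N$ large enough (in terms of $\delta,C,\alpha,\beta,k$, all now fixed) makes the right-hand side strictly less than $-(\tfrac12-\alpha)$, so $|\operatorname{EG}|>\tfrac12-\alpha$ and \eqref{eq.3} is violated. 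Hence for these $A$ and $B$ every partition $\{D_i\}_{i=1}^k$ violates one of \eqref{eq.1}, \eqref{eq.2}, \eqref{eq.3}, which is exactly the assertion of the theorem.
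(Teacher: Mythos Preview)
Your argument is correct and follows essentially the same strategy as the paper: a fine, nearly balanced grid of voters, a boundary-counting discrepancy estimate showing that any district satisfying \eqref{eq.1} and \eqref{eq.2} contains voters in roughly the global proportions, and the conclusion that $A$ sweeps every district, pinning $\operatorname{EG}$ near $-1/2$. The paper's construction is slightly more economical in the one place you flag as the main obstacle: instead of a perturbed checkerboard, it places exactly $a$ $A$-voters and $b$ $B$-voters (with $b/a$ just below $1$) inside each $\epsilon$-square, so every full $\epsilon$-square already has precisely the target ratio and your ``second discrepancy estimate'' becomes trivial---only the $O(|\partial D_i|/\epsilon)$ squares meeting the boundary contribute any error.
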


Notably, our result does not require the districts to be connected.
The idea behind the proof is straightforward:
We consider homogeneous mixtures of voters in the unit square, where just over half of the voters belong to $A$, and just under half belong to $B$.
In this extreme case, one would need to surgically design a district in order for $B$ to be the majority while simultaneously being large enough to satisfy (i).
This surgery would in turn force the district to exhibit bizarre shape, i.e., its Polsby--Popper score would be quite small (see Figure~\ref{fig.example}).
As such, districts satisfying both (i) and (ii) are necessarily majority-$A$.
All told, $A$ wastes a tiny portion of its votes, whereas $B$ wastes all of its votes, thereby violating (iii).

Before presenting the formal proof, we take a moment to discuss the result.
First, one could argue that our result does not necessarily preclude the real-world utility of (i), (ii) and (iii), as the $A$ and $B$ we construct are perhaps not likely arise in practice.
Indeed, our result does not suggest that impossibility arrives from every possible distribution of voters.
For example, if the western half of the state were all blue and the eastern half were all red, then it would be straightforward to draw nice-looking districts with small efficiency gap.
So now we have a spectrum of distribution possibilities (from purely homogeneous to completely separated), and the real-world partisan distributions reside somewhere along this spectrum.
How does impossibility depend on this spectrum, and where does reality reside?
These are both interesting questions that warrant further investigation. 

Along these lines, we point out that perhaps surprisingly, the issue is not necessarily resolved by the presence of partisan clusters.
For instance, you can modify the example in Figure~\ref{fig.example} by selecting a contiguous portion of the map and changing all the red votes in that region to blue.
While this would result in a blue partisan cluster, the modification would not change the fact that all districts in the middle panel are majority blue, and so the efficiency gap would still be quite large.
One could argue that the congressional districts in Massachusetts present a real-world example of this phenomenon, as all of the seats in this state are occupied by Democrats even though the districts do not exhibit bizarre shape.

While our impossibility result identifies a tension between Polsby--Popper compactness and partisan efficiency, we suspect there is a more general meta-theorem dictating a fundamental tradeoff between geographic compactness and simple quantifications of partisan gerrymandering.
For example, the $A$ and $B$ we construct demonstrates impossibility with any alternative to efficiency gap that would disallow a slight majority winning every district.
One such alternative is proportionality, which requires the number of seats won across the state to be roughly proportional to the number of votes cast; impossibility here is perhaps a moot point since the U.S.\ Supreme Court already established in \textit{Davis
v.\ Bandemer} that proportionality is not a valid constitutional standard.

Many states have laws requiring voting districts to exhibit geographic compactness~\cite{Levitt:online}, but there is no standard approach to measure compactness.
For example, as an alternative to Polsby--Popper, one could ask that a district's area be sufficiently large compared to either the smallest circle containing the district or the district's convex hull~\cite{Redrawing:online}.
It appears that the techniques in our proof do not easily transfer to these alternatives.
In particular, forcing the districts to be convex already presents what appears to be an interesting problem in additive combinatorics, which we leave for future work.

\begin{proof}[Proof of Theorem~\ref{thm.main result}]
Fix $k$.
For $n$ large, partition $[0,1]^2$ into squares of edge length $\epsilon=1/n$.
Take $a$, $b$ and $\ell$ such that $a+b=\ell^2$, and let $L$ denote the lattice $(\frac{1}{n\ell}(\mathbb{Z}+\frac{1}{2}))^2$.
Define $A$ to have $a$ voters in each $\epsilon$-square intersect $L$, and define $B$ to have $b$ voters in each $\epsilon$-square intersect $L$.
See Figure~\ref{fig.example} for an illustration.

Now take a partition into districts $D_1\sqcup\cdots\sqcup D_k=[0,1]^2$ that satisfies (i) and (ii).
Pick $i\in\{1,\ldots,k\}$.
The $\epsilon$-squares that contain $\partial D_i$ are in turn contained in an $\epsilon\sqrt{2}$-thickened version of $\partial D_i$, which has area at most $\sqrt{2}|\partial D_i|\epsilon+2\pi\epsilon^2$.
As such, $\partial D_i$ is contained in at most $E:=\sqrt{2}|\partial D_i|/\epsilon+2\pi$ different $\epsilon$-squares.
Meanwhile, $D_i$ contains at least $|D_i|/\epsilon^2-E$ and at most $|D_i|/\epsilon^2$ different $\epsilon$-squares.
Overall, we may conclude that $A$ wins the district $D_i$ if the second inequality below holds:
\[
|A\cap D_i|
\geq a\bigg(\frac{|D_i|}{\epsilon^2}-E\bigg)
\geq b\bigg(\frac{|D_i|}{\epsilon^2}+E\bigg)
\geq |B\cap D_i|.
\]
Specifically, it suffices to have
\[
\frac{b}{a}
\leq\frac{|D_i|-\epsilon^2 E}{|D_i|+\epsilon^2 E},
\]
which by (ii) is implied by
\begin{equation}
\label{eq.to get}
\frac{b}{a}
\leq\frac{|\partial D_i|^2-C\epsilon^2 E}{|\partial D_i|^2+C\epsilon^2 E}.
\end{equation}
Next, (i) gives that
\[
\big|(A\cup B)\cap D_i\big|
\geq (1-\delta)\bigg\lfloor\frac{|A\cup B|}{k}\bigg\rfloor
\geq (1-\delta)\frac{|A\cup B|}{2k}
=(1-\delta)\frac{n^2\ell^2}{2k},
\]
and since these points lie in $L$, two of them must be of distance at least $\sqrt{(1-\delta)/(2k)}$ from each other.
As such, $|\partial D_i|\geq\sqrt{(1-\delta)/(2k)}=:F$, and so \eqref{eq.to get} is implied by
\begin{equation}
\label{eq.bound}
\frac{b}{a}
\leq\frac{F^2-CF\sqrt{2}\epsilon-2C\pi\epsilon^2}{F^2+CF\sqrt{2}\epsilon+2C\pi\epsilon^2}.
\end{equation}
Observe that \eqref{eq.bound} is independent of our choice of $i$, and so \eqref{eq.bound} implies that $A$ wins every district $D_i$.

At this point, since $n$ was chosen to be arbitrarily large, $\epsilon$ is arbitrarily small, and so we may pick $a$ and $b$ so that $\gamma=1-b/a>0$ is arbitrarily small while still satisfying \eqref{eq.bound}.
As such, $B$ loses every district, thereby wasting all $bn^2=(1-\gamma)an^2$ of its votes, whereas $A$ narrowly wins every district, thereby wasting at most $an^2-bn^2=\gamma an^2$ of its votes.
Overall, the efficiency gap is
\[
\operatorname{EG}(D_1,\ldots,D_k;A,B)
\leq\frac{\gamma an^2-bn^2}{an^2+bn^2}
=\frac{2\gamma-1}{2-\gamma},
\]
which is arbitrarily close to $-1/2$ despite
\[
\frac{\big||A|-|B|\big|}{|A\cup B|}
=\frac{an^2-bn^2}{an^2+bn^2}
=\frac{\gamma}{2-\gamma}
\]
being arbitrarily small. 
Therefore, every districting system that satisfies both (i) and (ii) necessarily violates (iii).
\end{proof}

\begin{figure}
\centering
\includegraphics[width=\textwidth]{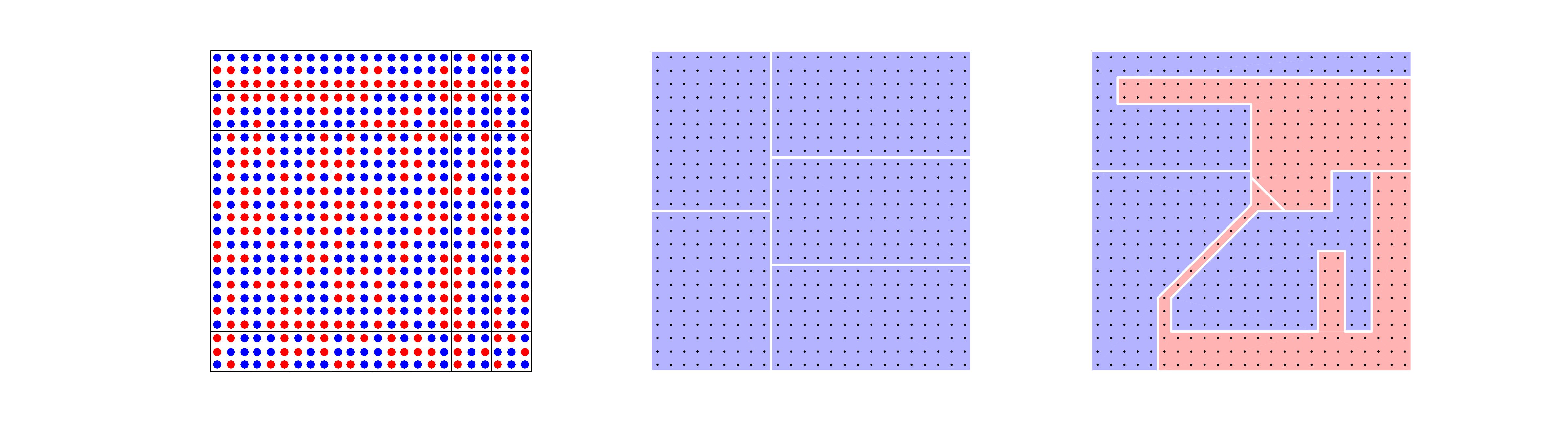}
\caption{\label{fig.example}
\textbf{(left)}
Voter locations in $[0,1]^2$.
The blue and red dots correspond to $A$ and $B$, respectively.
In this example, the unit square is partitioned into smaller squares of side length $\epsilon=1/8$.
Each $\epsilon$-square contains $9$ voters in an $\ell\times \ell$ subset of a lattice with $\ell=3$.
Of these voters, $a=5$ belong to $A$ and $b=4$ belong to $B$.
\textbf{(middle)}
Five districts drawn according to the shortest splitline algorithm proposed by the Center for Range Voting in~\cite{Gerry:online}.
This algorithm is specifically designed to produce districts that satisfy desiderata (i) and (ii).
In particular, this algorithm ignores voter preferences.
In this case, the number of voters in each district is within $\delta=0.07$ of the average and the smallest Polsby--Popper score $4\pi|D_i|/|\partial D_i|^2$ is over $0.70$.
(According to the isoperimetric inequality~\cite{Osserman:78}, the largest score possible is 1, which is achieved uniquely by the circle.)
However, despite $B$ making up 44\% of the vote, $A$ won every district.
This is reflected in the efficiency gap being over 38\% in favor of $A$.
\textbf{(right)}
One may attempt to decrease the efficiency gap by exploiting clusters in $B$.
In this spirit, we hand-drew districts of similar size.
The result is five districts within $\delta=0.04$ of the average and an efficiency gap of about 2\% in favor of $A$.
In exchange for this partisan efficiency, the smallest Polsby--Popper score is now about $0.12$.
For reference, Case~\cite{Case:07} suggests flagging scores below $0.20$ as instances of possible gerrymandering.
Our main result (Theorem~\ref{thm.main result}) establishes that this tradeoff is unavoidable with any districting system.
}
\end{figure}

\section*{Acknowledgments}
This work was conceived from an episode of the podcast More Perfect while DGM was visiting New York University to speak in the Math and Data Seminar.
The authors thank Joey Iverson and John Jasper for enlightening conversations, as well as Mira Bernstein, Ben Blum-Smith, Moon Duchin and Vladimir Kogan for constructive feedback that substantially improved the discussion of our results.
DGM was partially supported by AFOSR F4FGA06060J007 and AFOSR Young Investigator Research Program award F4FGA06088J001.
The views expressed in this article are those of the authors and do not reflect the official policy or position of the authors' employers, the United States Air Force, Department of Defense, or the U.S.\ Government.


\begin{thebibliography}{WW}

\bibitem{Arrow:50}
K.\ J.\ Arrow,
A difficulty in the concept of social welfare,
J.\ Political Econ.\ 58 (1950) 328--346.

\bibitem{BernsteinD:17}
M.\ Bernstein, M.\ Duchin,
A formula goes to court:\ Partisan gerrymandering and the efficiency gap,
Available online:\ arXiv:1705.10812

\bibitem{Blythe:17}
A.\ Blythe,
U.S.\ Supreme Court agrees NC lawmakers created illegal congressional district maps in 2011,
Charlotte Observer, May 22, 2017,
\url{http://www.charlotteobserver.com/article151912142.html}

\bibitem{Case:07}
J.\ Case,
Flagrant Gerrymandering:\ Help from the Isoperimetric Theorem?,
SIAM News 40 (2007).

\bibitem{CohnB:17}
N.\ Cohn, Q.\ Bui,
How the New Math of Gerrymandering Works,
New York Times, Oct.\ 3, 2017,
\url{https://www.nytimes.com/interactive/2017/10/03/upshot/how-the-new-math-of-gerrymandering-works-supreme-court.html}

\bibitem{Gerry:online}
Gerrymandering and a cure---shortest splitline algorithm,
\url{http://www.rangevoting.org/GerryExamples.html}

\bibitem{Griffith:07}
E.\ C.\ Griffith, 
The Rise and Development of the Gerrymander,
Scott, Foresman, 1907.

\bibitem{Ingraham:14}
C.\ Ingraham,
America's most gerrymandered congressional districts,
Washington Post, May 15, 2014,
\url{https://www.washingtonpost.com/news/wonk/wp/2014/05/15/americas-most-gerrymandered-congressional-districts}

\bibitem{Levitt:online}
J.\ Levitt,
Where are the lines drawn?,
\url{http://redistricting.lls.edu/where-state.php}

\bibitem{Metric:online}
Metric Geometry and Gerrymandering Group,
\url{https://sites.tufts.edu/gerrymandr/project/}

\bibitem{Moncrief:11}
G.\ F.\ Moncrief,
Reapportionment and Redistricting in the West,
Lexington, 2011.

\bibitem{Osserman:78}
R.\ Osserman,
The Isoperimetric Inequality,
Bull.\ Am.\ Math.\ Soc.\ 84 (1978) 1182--1238.

\bibitem{Pisanty:online}
E.\ Pisanty,
How can I use Mathematica to sort the US Congressional Districts by Perimeter$\div$Area,
Mathematica Stack Exchange,
\url{https://mathematica.stackexchange.com/questions/119944}

\bibitem{PolsbyP:91}
D.\ D.\ Polsby, R.\ D.\ Popper,
The Third Criterion: Compactness as a Procedural Safeguard against Partisan Gerrymandering,
Yale Law Policy Rev.\ 9 (1991) 301--353.

\bibitem{Redrawing:online}
Redrawing the Map on Redistricting 2010: A National Study,
Azavea, 2009,
\url{http://www.redistrictingthenation.com/whitepaper.aspx}

\bibitem{Smith:14}
J.\ D.\ Smith,
On Democracy's Doorstep:\ The Inside Story of how the Supreme Court Brought ``One Person, One Vote'' to the United States.
Hill and Wang, 2014.

\bibitem{StephM:15}
N.\ O.\ Stephanopoulos, E.\ M.\ McGhee,
Partisan gerrymandering and the efficiency gap,
Univ.\ Chic.\ Law Rev.\ (2015) 831--900.

\end{thebibliography}
\end{document}